\def\phi{{\varphi}}
\DeclareSymbolFont{AMSb}{U}{msb}{m}{n}
\DeclareMathSymbol{\N}{\mathbin}{AMSb}{"4E}
\DeclareMathSymbol{\Z}{\mathbin}{AMSb}{"5A}
\DeclareMathSymbol{\R}{\mathbin}{AMSb}{"52}
\DeclareMathSymbol{\Q}{\mathbin}{AMSb}{"51}
\DeclareMathSymbol{\I}{\mathbin}{AMSb}{"49}
\DeclareMathSymbol{\C}{\mathbin}{AMSb}{"43}
\def\be{\begin{equation}}
\def\ee{\end{equation}}
\def\ber{\begin{eqnarray}}
\def\eer{\end{eqnarray}}
\def\beq{\begin{equation}}
\def\eeq{\end{equation}}
\def\Z{{\mathbb{Z}}}
\begin{document}

\addtolength{\textheight}{0 cm} \addtolength{\hoffset}{0 cm}
\addtolength{\textwidth}{0 cm} \addtolength{\voffset}{0 cm}

\newenvironment{acknowledgement}{\noindent\textbf{Acknowledgement.}\em}{}

\setcounter{secnumdepth}{5}

 \newtheorem{proposition}{Proposition}[section]
\newtheorem{theorem}{Theorem}[section]
\newtheorem{lemma}[theorem]{Lemma}
\newtheorem{coro}[theorem]{Corollary}
\newtheorem{remark}[theorem]{Remark}
\newtheorem{extt}[theorem]{Example}
\newtheorem{claim}[theorem]{Claim}
\newtheorem{conj}[theorem]{Conjecture}
\newtheorem{definition}[theorem]{Definition}
\newtheorem{application}{Application}

\newtheorem*{thm*}{Theorem A}

\newtheorem{corollary}[theorem]{Corollary}

\title{A variational principle for   problems  with a hint of    convexity \\
\footnote{The author is pleased to acknowledge the support of the  National Sciences and Engineering Research Council of Canada.
}}
\author{
Abbas Moameni
\hspace{2mm}\\
{\it\small School of Mathematics and Statistics}\\
{\it\small Carleton University,}
{\it\small Ottawa, ON, Canada  }\\
{\it\small  momeni@math.carleton.ca}\\\\
}

\date{}

\maketitle

\vspace{3mm}

\section*{Abstract}
A variational principle is introduced to provide a new formulation and resolution for several boundary value problems with a variational structure. This principle allows one to deal with problems well beyond the weakly compact structure. As a result, we study several super-critical semilinear  Elliptic problems.

\noindent
{\it \footnotesize 2010 Mathematics Subject Classification:    35J15, 	58E30}. {\scriptsize }  	 \\
{\it \footnotesize Key words:      Variational principles, supercritical Elliptic problems}. {\scriptsize }

\section{Introduction}

Let $V$ be a  real Banach  space and $V^*$ its topological dual  and let $\langle .,. \rangle $ be the pairing between $V$ and $V^*.$ Let $\Psi : V \rightarrow \mathbb{R}\cup \{+\infty\}$ be a proper convex and lower semi continuous  function and let $K$ be a convex and weakly closed subset of  $ V.$  Assume that $\Psi$ is 
G\^ateaux differentiable on $K$ and denote by $D \Psi$ the G\^ateaux derivative of $\Psi.$ 
Let  $\Phi\in C^1(V, \R)$  and consider the following problem,  
\begin{equation}\label{p1}
\text{Find } u_0 \in K \text{ such that } D \Psi(u_0)=D \Phi(u_0).
\end{equation}
 The restriction of $\Psi$ to $K$ is denoted by $\Psi_K$  and defined by 
\begin{eqnarray*}\Psi_K(u)=\left\{ \begin{array}{cc}
\Psi(u),&  \quad u \in K,\\
+\infty, & \quad u \not \in K.
\end{array} 
\right.
\end{eqnarray*}
To find a solution for (\ref{p1}), we shall consider the critical points of the functional $I: V \rightarrow \mathbb{R}\cup \{+\infty\}$ defined  by 
\[I(u):=\Psi_K(u)-\Phi(u).\]

According to  Szulkin \cite{Su}  we have the following definition for critical points  of $I$ (see also the appendix).
\begin{definition}\label{p3} A point $u\in V$ is said to be a critical point of $I$ if $I(u)\in \R$ and  it satisfies  the inequality 
\begin{eqnarray}\label{p2}
\Psi_K(v)-\Psi_K(u) \geq \langle D \Phi(u), v-u\rangle, \qquad \forall v \in V.
\end{eqnarray}
\end{definition}
Note that a function $u$  satisfying (\ref{p2}) is indeed a solution of  the inclusion $D \Phi(u) \in \partial \Psi_K(u).$ Therefore, it is not necessarily a solution of (\ref{p1}) unless $D=V.$ There is a  well developed theory to find critical points of functionals of the form $I$. We refer the interested reader to \cite{Su, M-P}. Here is our main result in this paper.

\begin{theorem}[Variational Principle]\label{main}
Let $\Psi : V \rightarrow \mathbb{R}\cup \{+\infty\}$ be a proper convex and lower semi continuous  function and let $K$ be a convex and weakly closed subset of  $ V.$ Assume that $\Psi$ is 
G\^ateaux differentiable on $K$ and $\Phi\in C^1(V, \R)$. 
  If the following two assertions hold:
 \begin{enumerate}
 \item[(i)]The functional $I: V \rightarrow \mathbb{R}\cup \{+\infty\}$ defined  by 
$I(u)=\Psi_K(u)-\Phi(u)$ has a critical point $u_0\in V,$ and;
 \item[(ii)] there exists $v_0 \in K$ such that $D \Psi(v_0)=D\Phi(u_0).$
 \end{enumerate}
 Then $u_0\in K$ is a solution of (\ref{p1}), that is, 
  \[D \Psi(u_0)=D\Phi(u_0).\]
\end{theorem}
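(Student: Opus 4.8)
The plan is to translate the whole statement into the language of the convex subdifferential and then exploit hypothesis (ii) as an ``anchor'' that upgrades an inequality valid only on $K$ into a genuine global subgradient inequality for $\Psi$. First I would record that, since $u_0$ is a critical point, $I(u_0)\in\R$ forces $\Psi_K(u_0)$ to be finite, so necessarily $u_0\in K$ and $\Psi(u_0)<+\infty$. Writing $p:=D\Phi(u_0)\in V^*$, I would then unpack the Szulkin criticality condition (\ref{p2}): for $v\notin K$ the left-hand side equals $+\infty$ and the inequality is vacuous, so its only content is
\[
\Psi(v)-\Psi(u_0)\ \geq\ \langle p,\, v-u_0\rangle,\qquad \forall v\in K,
\]
which is precisely the assertion $p\in\partial\Psi_K(u_0)$.

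The key step is to promote this \emph{restricted} inequality to one holding on all of $V$. By hypothesis (ii) there is $v_0\in K$ with $D\Psi(v_0)=p$. Since $\Psi$ is convex and G\^ateaux differentiable at $v_0$, its derivative is a genuine global subgradient (the difference quotient $t\mapsto t^{-1}\bigl(\Psi(v_0+t(w-v_0))-\Psi(v_0)\bigr)$ is nondecreasing, so its limit at $t\to0^+$ is dominated by its value at $t=1$), whence
\[
\Psi(w)\ \geq\ \Psi(v_0)+\langle p,\, w-v_0\rangle,\qquad \forall w\in V.
\]
Now I would insert the admissible competitor $v=v_0\in K$ into the restricted inequality above, obtaining $\Psi(v_0)\geq \Psi(u_0)+\langle p, v_0-u_0\rangle$, and substitute this lower bound for $\Psi(v_0)$ into the last display. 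The linear terms telescope, $\langle p, v_0-u_0\rangle+\langle p, w-v_0\rangle=\langle p, w-u_0\rangle$, leaving
\[
\Psi(w)\ \geq\ \Psi(u_0)+\langle p,\, w-u_0\rangle,\qquad \forall w\in V.
\]

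This final inequality says exactly that $p\in\partial\Psi(u_0)$, now for the \emph{unrestricted} function $\Psi$. Because $\Psi$ is convex and G\^ateaux differentiable at $u_0\in K$, its subdifferential there is the singleton $\{D\Psi(u_0)\}$: any subgradient $q$ satisfies $\langle q,h\rangle\leq\langle D\Psi(u_0),h\rangle$ for every direction $h$, and testing both $h$ and $-h$ forces $q=D\Psi(u_0)$. Hence $D\Psi(u_0)=p=D\Phi(u_0)$, which is (\ref{p1}). The one genuinely delicate point, and the reason hypothesis (ii) cannot be dropped, is precisely this passage from $\partial\Psi_K(u_0)$ to $\partial\Psi(u_0)$: criticality only controls $\Psi$ against competitors lying inside $K$, and the element $v_0$ is exactly what supplies the missing global subgradient bound needed to extend the test from $K$ to all of $V$. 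Without it, $u_0$ would solve merely the inclusion $D\Phi(u_0)\in\partial\Psi_K(u_0)$ rather than the equation (\ref{p1}).
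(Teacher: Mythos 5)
Your proof is correct, and it differs from the paper's in a meaningful way at the decisive step. Both arguments share the same skeleton: finiteness of $I(u_0)$ forces $u_0\in K$; testing the Szulkin inequality (\ref{p2}) at the admissible competitor $v=v_0$ gives $\Psi(v_0)-\Psi(u_0)\geq\langle D\Phi(u_0),v_0-u_0\rangle$; and Gâteaux differentiability of the convex $\Psi$ at $v_0$ makes $D\Psi(v_0)=D\Phi(u_0)$ a \emph{global} subgradient at $v_0$. Where you diverge is in how the subgradient is transferred from $v_0$ to $u_0$. The paper first combines the two inequalities into the equality $\Psi(v_0)-\Psi(u_0)=\langle D\Psi(v_0),v_0-u_0\rangle$ (its (\ref{ineq3})) and then invokes Fenchel duality, via Proposition \ref{var-pro}, to convert the identity $\Psi(u_0)+\Psi^*(w^*)=\langle w^*,u_0\rangle$ into $w^*\in\partial\Psi(u_0)$. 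You instead chain the two one-sided inequalities directly --- $\Psi(w)\geq\Psi(v_0)+\langle p,w-v_0\rangle\geq\Psi(u_0)+\langle p,w-u_0\rangle$ --- and never introduce the conjugate $\Psi^*$ at all; note that you do not even need the reverse inequality or the resulting equality that the paper derives. What your route buys is economy and transparency: it is entirely elementary (only the monotone difference-quotient fact for convex functions), it is manifestly independent of the lower semicontinuity of $\Psi$, which the paper's stated Proposition \ref{var-pro} nominally carries as a hypothesis, and it isolates exactly why hypothesis (ii) is indispensable --- $v_0$ is the anchor that upgrades membership in $\partial\Psi_K(u_0)$ to membership in $\partial\Psi(u_0)$. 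What the paper's route buys is a formulation in the language of Fenchel duality, which fits the author's broader program of duality-based variational principles and extends more naturally to settings where $\Psi$ is not differentiable and one must work with subdifferentials and conjugates throughout.
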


The above theorem has many interesting applications in partial differential equations . We shall briefly recall some of them and refer the interested reader to \cite{Moo} where some more general  versions of Theorem \ref{main} are established and several applications in the fixed point theory and PDEs are provided. It is also worth noting that Theorem \ref{main} extends some of variational principles established by the author in \cite{Mo1, Mo2}.\\
We shall now proceed with some applications.

\subsection{A concave-convex nonlinearity}
We consider the problem
\begin{equation}\label{con-c}
\left \{
\begin{array}{ll}
-\Delta u =|u|^{p-2} u+\mu |u|^{q-2}u, &   x \in \Omega\\
u=0, &  x \in \partial \Omega
\end{array}
\right.
\end{equation}
where $\Omega\subset \R^n$ is a bounded domain with $C^1$-boundary and  $1<q\leq 2<p.$ This problem was studied by Ambrosetti and etc. in \cite{A-B-C} and Bartsch and Willem in \cite{B-W}. Our plan is to show that for positive $\mu$ and $p$ bigger that the critical exponent $2^*=2n/(n-2),$ problem (\ref{con-c}) has a strong  solution  in $H^2(\Omega).$ \\
Let $V=H^2(\Omega)\cap H_0^1(\Omega),$ and let  $I:V \to \R $ be the Euler-Lagrange functional corresponding to (\ref{non}),  
\[I(u)=\frac{1}{2}\int_\Omega |\nabla u|^2 \, dx -\frac{1}{p}\int_\Omega | u|^p \, dx-\frac{1}{q}\int_\Omega |u|^q\, dx.\]
For $r>0$, define  the convex set $K(r)$ by
\[K(r)=\Big \{u \in H^2(\Omega)\cap H_0^1(\Omega); \, \|u\|_{H^2(\Omega)}\leq r\Big \}.\]
We have the following result
\begin{theorem}\label{C-C} Assume that $1< q< 2< p <p^*$ where $p^*=(2n-4) /(n-4)$ for $n > 4$ and $p^*=\infty $ for $n\leq 4.$ 
Then there exists $\mu^*>0$ such that for each $\mu  \in(0, \mu^*)$  problem (\ref{con-c}) has a  non-trivial solution. Indeed, for each $\mu  \in(0, \mu^*),$ there exist positive  numbers  $r_1, r_2 \in \R$ with $r_1 <r_2$ such that  for each $r \in [r_1,r_2]$ the problem (\ref{con-c}) has a  solution $u \in K(r)$ with $I(u)<0.$ 
\end{theorem}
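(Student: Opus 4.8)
The plan is to cast (\ref{con-c}) into the abstract framework of Theorem \ref{main} and then verify its two hypotheses for a suitable radius $r$. On $V=H^2(\Omega)\cap H_0^1(\Omega)$ I would set $\Psi(u)=\frac12\int_\Omega|\nabla u|^2\,dx$ and $\Phi(u)=\frac1p\int_\Omega|u|^p\,dx+\frac{\mu}{q}\int_\Omega|u|^q\,dx$, and take $K=K(r)$. Then $\Psi$ is convex, continuous and G\^ateaux differentiable with $\langle D\Psi(u),\phi\rangle=\int_\Omega\nabla u\cdot\nabla\phi\,dx$, while $K(r)$ is a bounded, closed, convex, hence weakly compact, subset of the Hilbert space $V$. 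The point of working in $H^2\cap H_0^1$ rather than $H_0^1$ is that, because $p<p^*<2n/(n-4)$ (for $n>4$; the case $n\le4$ being easier), the embedding $V\hookrightarrow L^p(\Omega)$ is compact even when $p$ exceeds the usual critical exponent $2^*$. This makes $\Phi$ weakly continuous and of class $C^1$ on $V$, and is exactly what lets us treat the supercritical regime. With these choices $I=\Psi_{K(r)}-\Phi$ is the functional in the statement.

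For hypothesis (i) I would produce the critical point as a minimizer. Since $\Psi$ is weakly lower semicontinuous and $\Phi$ is weakly continuous on the weakly compact set $K(r)$, the functional $I$ attains its infimum over $K(r)$ at some $u_0$. A standard convexity argument, inserting $u_0+t(v-u_0)$, using convexity of $\Psi_{K(r)}$, dividing by $t$ and letting $t\to0^+$, shows that any minimizer of $\Psi_{K(r)}-\Phi$ satisfies the Szulkin inequality (\ref{p2}) for all $v\in V$, so $u_0$ is a critical point of $I$. To force $u_0\ne0$ I would test with a fixed $w$ and use $1<q<2$: for small $t>0$ one has $tw\in K(r)$ and $I(tw)=\frac{t^2}{2}\|\nabla w\|_2^2-\frac{t^p}{p}\|w\|_p^p-\mu\frac{t^q}{q}\|w\|_q^q<0$, the concave term of order $t^q$ dominating. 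Hence $I(u_0)\le\inf_{K(r)}I<0=I(0)$, which yields the strict negativity claimed and makes $u_0$ nontrivial.

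The heart of the matter is hypothesis (ii): finding $v_0\in K(r)$ with $D\Psi(v_0)=D\Phi(u_0)$, that is, solving $-\Delta v_0=f_0:=|u_0|^{p-2}u_0+\mu|u_0|^{q-2}u_0$ in $\Omega$ with $v_0=0$ on $\partial\Omega$. The restriction $p<p^*=(2n-4)/(n-4)$ is precisely what forces $f_0\in L^2(\Omega)$: from $u_0\in H^2\hookrightarrow L^{2n/(n-4)}$ and $2(p-1)\le 2n/(n-4)$ one gets $|u_0|^{p-1}\in L^2$, and likewise for the lower-order term. Elliptic regularity then provides a unique $v_0\in H^2(\Omega)\cap H_0^1(\Omega)$ with $\|v_0\|_{H^2}\le C\|f_0\|_{L^2}$, and combining this with the Sobolev bound $\|u_0\|_{L^{2(p-1)}}\le C\|u_0\|_{H^2}\le Cr$ (and its analogue for $q$) yields an estimate of the form $\|v_0\|_{H^2}\le C_1 r^{p-1}+\mu C_2 r^{q-1}$.

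It then remains to choose $r$ so that $v_0$ lands back inside $K(r)$, i.e. $\|v_0\|_{H^2}\le r$; this is where the radii $r_1<r_2$ and the threshold $\mu^*$ arise, and it is the main obstacle. Dividing by $r$, the requirement becomes $h(r):=C_1 r^{p-2}+\mu C_2 r^{q-2}\le1$. Since $p>2>q$, one has $h(r)\to+\infty$ as $r\to0^+$ and as $r\to\infty$, so $h$ attains an interior minimum; a scaling computation shows this minimum behaves like $\mu^{(p-2)/(p-q)}$ and hence tends to $0$ as $\mu\to0$. Thus there is $\mu^*>0$ such that for every $\mu\in(0,\mu^*)$ one has $\min_r h<1$, and the sublevel set $\{r>0:h(r)\le1\}$ is a genuine interval $[r_1,r_2]$ with $0<r_1<r_2$. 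For each $r$ in this interval $v_0\in K(r)$, so hypothesis (ii) holds and Theorem \ref{main} gives $D\Psi(u_0)=D\Phi(u_0)$, which is the weak, and since $u_0\in H^2$ the strong, form of (\ref{con-c}). Together with $I(u_0)<0$ this produces the nontrivial solution $u\in K(r)$ asserted for every $r\in[r_1,r_2]$.
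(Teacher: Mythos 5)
Your proposal is correct and follows essentially the same route as the paper: minimize $I$ over the weakly compact set $K(r)$, use $1<q<2<p$ to get $I(u_0)<0$ and hence nontriviality, solve $-\Delta v_0=|u_0|^{p-2}u_0+\mu|u_0|^{q-2}u_0$ via elliptic regularity, and choose $\mu^*$ and the interval $[r_1,r_2]$ so that $C_1 r^{p-1}+\mu C_2 r^{q-1}\leq r$, which puts $v_0$ back in $K(r)$ and lets Theorem \ref{main} apply. In fact you supply details the paper leaves as standard (the Szulkin inequality for the minimizer and the scaling analysis showing the minimum of $h(r)=C_1r^{p-2}+\mu C_2 r^{q-2}$ tends to $0$ as $\mu\to 0$), so nothing further is needed.
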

\begin{proof} 
We apply Theorem \ref{main}, where 
\[\Psi(u)=\frac{1}{2}\int_\Omega |\nabla u|^2\, dx, \qquad \Phi(u)=\frac{1}{p}\int_\Omega | u|^p \, dx+\int_\Omega |u|^q\, dx,\]
and 
$K:=K(r)$
for some  $r>0$ to be determined. Note that the Sobolev space  $H^2(\Omega)$ is compactly embedded in $L^t(\Omega)$ for 
$t < t^*$ where $t^*=2n/(n-4)$ for $n>4,$ and $t^*=+\infty$ for $n \leq 4.$   It then  follows that the function $\Phi$  is continuously differentiable for $p<p^*.$ By standard methods, there exists $u_0\in K(r)$ such that 
\[I(u_0)=\min_{u\in K(r)} I(u).\]
Since $1<q<2<p$ and $\mu>0,$
it is easily seen that $I(u_0)<0$ and therefore $u_0\not\equiv 0$ is a critical point of $I$ restricted to $K(r).$
To verify  condition $(ii)$ in Theorem \ref{main}, we show that there exists $v_0\in K(r)$ such that $-\Delta v_0=|u_0|^{p-2}u_0+\mu |u_0|^{q-2}u_0.$  The existence of such $v_0$ follows by standard arguments.  We  show that $v_0 \in K(r)$ for $r$ small. It follows from the Elliptic regularity theory (see Theorem 8.12 in  \cite{G-T}) that 
\begin{eqnarray*}\|v_0\|_{H^2(\Omega)} & \leq & C\Big (\big \||u_0|^{p-2}u_0\big \|_{L^2(\Omega)}+\mu\big \||u_0|^{q-2}u_0\big \|_{L^2(\Omega)}\Big )\\
& = & C\Big (\big \|u_0\big \|^{p-1}_{L^{2(p-1)}(\Omega)}+\mu\big \|u_0\big \|^{q-1}_{L^{2(q-1)}(\Omega)}\Big ),
\end{eqnarray*}
where $C$ is a constant depending on $\Omega.$
Since $2(q-1)<2(p-1)<t^*,$ we obtain that 
\begin{eqnarray*}\|v_0\|_{H^2(\Omega)} 
& \leq & C_1\Big (\big \|u_0\big \|^{p-1}_{H^2(\Omega)}+\mu\big \|u_0\big \|^{q-1}_{H^2(\Omega)}\Big )\\
& \leq & C_1(r^{p-1}+\mu r^{q-1}).
\end{eqnarray*}
where  $C_1$ is a   constant in terms of $p, q$ and $ \Omega.$ Choose $\mu^*>0$ small enough such that for each $\mu  \in(0, \mu^*),$ there exist positive  numbers  $r_1, r_2 \in \R$ with $r_1 <r_2$ such that  $C_1(r^{p-1}+\mu r^{q-1})\leq r$ for all $r \in [r_1, r_2].$ It then follows that $v_0 \in K(r)$ provided $\mu  \in(0, \mu^*)$ and  $r\in [r_1,r_2]. $

\end{proof}

\subsection{Non-homogeneous semilinear Elliptic equations}
Here we shall consider the problem
\begin{eqnarray}\label{non}\left\{ \begin{array}{ll}
-\Delta u=|u|^{p-2} u+ f(x),&  \quad  x\in \Omega,\\
u=0, & \quad x \in \partial \Omega.
\end{array} 
\right.
\end{eqnarray}
where $\Omega$ is on open bounded domain  in $\R^n$ with $C^1$-boundary.  Problem (\ref{non}) was treated in \cite{B-B, St} for $p$ less than the critical exponent $2^*.$  As an application of Theorem \ref{main} together with Elliptic regularity theory  we shall show that problem (\ref{non}) has a solution for $p$ beyond the critical Sobolev exponent. In this case, the standard variational methods fail to work.  Note that our approach can be  applied  to more general nonlinearities (see \cite{Moo}).    We have the following theorem.
\begin{theorem}
Let  $ 2< p <p^*$ where $p^*=(2n-4) /(n-4)$ for $n > 4$ and $p^*= \infty $ for $n\leq 4.$  There exists $\lambda>0$ such that for $\|f\|_{L^2(\Omega)}< \lambda,$  problem   (\ref{non}) has a solution $u\in H^2(\Omega).$ 
\end{theorem}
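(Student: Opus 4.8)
The plan is to apply Theorem \ref{main} along exactly the lines of the proof of Theorem \ref{C-C}, with the concave term $\mu|u|^{q-2}u$ now replaced by the forcing $f$. I would set $V=H^2(\Omega)\cap H_0^1(\Omega)$ and take
\[\Psi(u)=\frac12\int_\Omega|\nabla u|^2\,dx,\qquad \Phi(u)=\frac1p\int_\Omega|u|^p\,dx+\int_\Omega f u\,dx,\]
so that $D\Psi(u)=-\Delta u$ and $D\Phi(u)=|u|^{p-2}u+f$; with $K:=K(r)$ for a radius $r>0$ to be fixed, the identity $D\Psi(u_0)=D\Phi(u_0)$ is precisely (\ref{non}). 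Since $H^2(\Omega)$ embeds compactly in $L^t(\Omega)$ for every $t<t^*=2n/(n-4)$, and the hypothesis $2<p<p^*$ forces $2(p-1)<t^*$, the functional $\Phi$ is of class $C^1$ on $V$ and weakly continuous on $K(r)$.

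First I would establish condition (i). Because $K(r)$ is a closed ball in the Hilbert space $V$, it is weakly compact; $\Psi$ is weakly lower semicontinuous, while $\Phi$ is weakly continuous on $K(r)$ by the compact embedding (the term $\int_\Omega fu$ being weakly continuous as $u_k\rightharpoonup u$ in $H^2$ implies $u_k\to u$ in $L^2$). The direct method then yields $u_0\in K(r)$ with $I(u_0)=\min_{K(r)}I$, and such a minimizer of $\Psi_K-\Phi$ is a critical point in the sense of Definition \ref{p3}.

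Next I would verify condition (ii). Since $u_0\in K(r)\subset H^2(\Omega)$ and $f\in L^2(\Omega)$, the right-hand side $|u_0|^{p-2}u_0+f$ lies in $L^2(\Omega)$, so the linear problem $-\Delta v_0=|u_0|^{p-2}u_0+f$ with $v_0|_{\partial\Omega}=0$ has a unique solution $v_0\in H^2(\Omega)\cap H_0^1(\Omega)$ by elliptic regularity (Theorem 8.12 in \cite{G-T}). It remains to show $v_0\in K(r)$. Combining the regularity estimate with the embedding $H^2\hookrightarrow L^{2(p-1)}$ gives
\[\|v_0\|_{H^2(\Omega)}\le C\big(\|u_0\|_{L^{2(p-1)}}^{p-1}+\|f\|_{L^2(\Omega)}\big)\le C_1\big(r^{p-1}+\|f\|_{L^2(\Omega)}\big).\]

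The crux of the argument, and the only place where the hypotheses must be balanced, is to force this bound to be at most $r$. I expect this self-improving step to be the main obstacle: one must exploit the superlinearity $p>2$ to absorb the nonlinear feedback and keep the auxiliary solution inside the prescribed ball. Concretely, since $p-1>1$, the term $C_1r^{p-1}$ is of lower order than $r$, so $C_1r^{p-1}\le r/2$ as soon as $r^{p-2}\le 1/(2C_1)$; fixing such an $r$ and then setting $\lambda=r/(2C_1)$, every $f$ with $\|f\|_{L^2(\Omega)}<\lambda$ satisfies $C_1(r^{p-1}+\|f\|_{L^2(\Omega)})\le r$, whence $v_0\in K(r)$. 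With both hypotheses of Theorem \ref{main} verified, $u_0$ solves $D\Psi(u_0)=D\Phi(u_0)$, i.e.\ (\ref{non}), which completes the proof.
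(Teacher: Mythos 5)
Your proposal is correct and follows essentially the same route as the paper: minimization of $\Psi_K-\Phi$ over the ball $K(r)$ to get condition (i), then elliptic regularity (Theorem 8.12 of \cite{G-T}) plus the embedding $H^2(\Omega)\hookrightarrow L^{2(p-1)}(\Omega)$ to place $v_0$ back in $K(r)$, choosing $r$ and $\lambda$ so that $C_1(r^{p-1}+\|f\|_{L^2(\Omega)})\leq r$. The paper's own proof is terser (it defers the ball-invariance estimate to the proof of Theorem \ref{C-C}), but your explicit choice $r^{p-2}\leq 1/(2C_1)$, $\lambda=r/(2C_1)$ is exactly the balancing the paper intends.
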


\begin{proof} Let $V=H^2(\Omega)\cap H_0^1(\Omega),$ and let  $I:V \to \R $ be the Euler-Lagrange functional corresponding to (\ref{non}),  
\[I(u)=\frac{1}{2}\int_\Omega |\nabla u|^2 \, dx -\frac{1}{p}\int_\Omega | u|^p \, dx-\int_\Omega f u\, dx.\]
We apply Theorem \ref{main}, where 
\[\Psi(u)=\frac{1}{2}\int_\Omega |\nabla u|^2\, dx, \qquad \Phi(u)=\frac{1}{p}\int_\Omega | u|^p \, dx+\int_\Omega f u\, dx,\]
and 
\[K:=K(r)=\big \{u \in H^2(\Omega)\cap H_0^1(\Omega); \, \|u\|_{H^2(\Omega)}\leq r\big \},\]
for some  $r>0$ to be determined.  By standard methods, there exists $u_0\in K(r)$ such that 
\[I(u_0)=\min_{u\in K(r)} I(u).\]
To verify  condition $(ii)$ in Theorem \ref{main}, one needs to show  that there exists $v_0\in K(r)$ such that $-\Delta v_0=|u_0|^{p-2}u_0+f(x).$ Existence of $v_0 \in H^2(\Omega)$ is standard. The fact that
$v_0\in K(r)$ for $\|f\|_{L^2(\Omega)}$ small, follows  by the Elliptic regularity theory and the argument made in the proof of Theorem \ref{C-C}.
\end{proof}

\subsection{Super critical Neumann problems}

We shall  consider the existence of positive solutions of the  Neumann problem
\begin{equation}\label{eq}
\left\{\begin{array}{ll}
-\Delta u + u= a(x)|u|^{p-2}u, &   x\in B_1 \\
u>0,    &   x\in B_1, \\
\frac{\partial u}{\partial \nu}= 0, &    x\in  \partial B_1,
\end {array}\right.
\end{equation}
where $B_1$ is the unit ball  centered at the origin  in $\mathbb{R}^N$, $N \geq 3,$ $p>2.$ and $a$ is a radial function, i.e., $a(x)=a(r)$ where $r=|x|.$ 
\begin{theorem}\label{exi}
Assume that  $a\in L^{\infty}(0,1)$ is increasing, not constant and $a(r)>0$ a.e. in $[0,1]$. Then problem (\ref{eq}) admits at least one radially increasing positive solution.
\end{theorem}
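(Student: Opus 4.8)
The plan is to apply Theorem~\ref{main} in the radial setting, exactly mirroring the strategy of Theorem~\ref{C-C} but now exploiting radial symmetry to gain compactness even in the supercritical regime. First I would pass to the space of radial functions $V=H^1_{rad}(B_1)$, equipped with the norm $\|u\|^2=\int_{B_1}(|\nabla u|^2+u^2)\,dx$, and set
\[
\Psi(u)=\frac12\int_{B_1}\big(|\nabla u|^2+u^2\big)\,dx,\qquad
\Phi(u)=\frac1p\int_{B_1}a(x)|u|^p\,dx,
\]
so that the weak form of \eqref{eq} is precisely $D\Psi(u)=D\Phi(u)$. The key structural point is that \eqref{eq} is a \emph{Neumann} problem, so the natural function space is $H^1$ rather than $H^1_0$, and the zeroth-order term $+u$ makes $\Psi$ coercive on all of $H^1$ without a boundary condition.

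The constraint set must be chosen to encode two features simultaneously: a bound that tames the supercritical term, and the monotonicity ``$u$ radially increasing.'' I would therefore take
\[
K=\Big\{u\in H^1_{rad}(B_1):\ u(r)\ \text{is non-decreasing in }r,\ u\ge 0,\ \|u\|_{H^1}\le \rho\Big\}
\]
for a radius $\rho>0$ to be fixed. The crucial gain is a pointwise bound: a radially increasing $H^1_{rad}$ function on $B_1$ attains its maximum on the boundary, and by a one-dimensional argument (the monotone rearrangement / radial Lemma type estimate) one controls $\|u\|_{L^\infty}$ by $\|u\|_{H^1}$ on $K$. Consequently $u\mapsto\int a|u|^p$ is well-defined, weakly continuous, and $C^1$ on $K$ \emph{for every} $p>2$, which is exactly what lets us bypass the critical exponent. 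Then $I=\Psi_K-\Phi$ is weakly lower semicontinuous and coercive on $K$ (the $L^\infty$ bound makes $\Phi$ a bounded perturbation), so a minimizer $u_0\in K$ exists and is a critical point of $I$ in the sense of Definition~\ref{p3}, giving hypothesis~(i).

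For hypothesis~(ii), I would solve the linear problem $-\Delta v_0+v_0=a(x)|u_0|^{p-2}u_0$ with homogeneous Neumann data; since the right-hand side lies in $L^\infty\subset L^2$, elliptic regularity yields a unique $v_0\in H^2_{rad}(B_1)$ with $\partial_\nu v_0=0$, satisfying $D\Psi(v_0)=D\Phi(u_0)$. The work is to show $v_0\in K$: that $v_0\ge 0$ follows from the maximum principle since the right side is nonnegative, and the quantitative $H^1$ bound $\|v_0\|_{H^1}\le C\rho^{p-1}\le\rho$ holds for $\rho$ small, just as in Theorem~\ref{C-C}. The genuinely delicate step, and the one I expect to be the main obstacle, is proving that $v_0$ is again \emph{radially increasing}, so that $v_0$ lands back in the monotonicity cone $K$; this is where the hypotheses that $a$ is increasing, nonconstant, and positive are used. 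The argument should run by writing the radial ODE $-(r^{N-1}v_0')'=r^{N-1}\big(a(r)|u_0|^{p-2}u_0-v_0\big)$, integrating to get $r^{N-1}v_0'(r)=\int_0^r s^{N-1}\big(a(s)|u_0|^{p-2}u_0(s)-v_0(s)\big)\,ds$, and showing the integrand pushes $v_0'\ge 0$; monotonicity of $a$ and of $u_0$ (itself an element of $K$) combine to force the right side nonnegative, whence $v_0'\ge0$. Finally, once Theorem~\ref{main} delivers $D\Psi(u_0)=D\Phi(u_0)$, i.e.\ a weak radial solution, I would invoke elliptic regularity and the strong maximum principle to upgrade $u_0$ to a classical positive solution, and the nonconstancy of $a$ together with $I(u_0)<0$ rules out the trivial and constant solutions, yielding a genuinely radially increasing positive solution.
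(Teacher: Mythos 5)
Your strategy fails at hypothesis (i), and the failure is structural, not a matter of missing details. You imported the ball-constraint-plus-minimization scheme from Theorem~\ref{C-C}, but that scheme works there only because of the concave term $\mu|u|^{q-2}u$ (which makes $\inf I<0$ near the origin) and the free small parameter $\mu$ (which creates the window $[r_1,r_2]$ where both the negative minimum and the estimate $C_1(r^{p-1}+\mu r^{q-1})\le r$ coexist). Problem (\ref{eq}) has neither: the nonlinearity is purely superlinear ($p>2$) and the theorem has no smallness parameter to tune. Concretely, on your set $K$ the $L^\infty$ bound you correctly identify gives $\Phi(u)\le \frac{\|a\|_\infty}{p}\|u\|_{L^\infty}^{p-2}\int_{B_1}u^2\le C\rho^{p-2}\|u\|_{H^1}^2$ for $\|u\|_{H^1}\le\rho$, so
\begin{equation*}
I(u)\ \ge\ \Big(\tfrac12-C\rho^{p-2}\Big)\|u\|_{H^1}^2\ >\ 0 \qquad \text{for all } u\in K\setminus\{0\}
\end{equation*}
once $\rho$ is small --- and small $\rho$ is exactly what your step (ii) requires, since $C\rho^{p-1}\le\rho$ forces $\rho\le C^{-1/(p-2)}$. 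Hence your minimizer is $u_0\equiv 0$, the method produces only the trivial solution, and your closing claim that ``$I(u_0)<0$'' is not only unproved but false in this setting. Enlarging $\rho$ so that $I$ becomes negative somewhere in $K$ (e.g.\ at large constants, which lie in the monotonicity cone) destroys the elliptic estimate needed to put $v_0$ back in the ball, and with no parameter like $\mu$ or $\|f\|_{L^2}$ available, the two requirements cannot be reconciled.

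The paper's route avoids this entirely: it takes $K$ to be the \emph{unconstrained} monotonicity cone (non-negative, radially non-decreasing functions in $V=L^p(\Omega)\cap H^1_r(\Omega)$), with no norm ball, uses the continuous embedding of $V\cap K$ into $L^\infty(\Omega)$ to make the supercritical term $C^1$ and to verify the Palais--Smale condition, and then obtains a \emph{nontrivial} critical point $u_0\in K$ of mountain-pass type via Szulkin's theorem (Theorem~\ref{MPT}); nontriviality comes for free from the positive critical level, not from a sign condition on $I(u_0)$. Hypothesis (ii) --- that the Neumann solution $v_0$ of $-\Delta v_0+v_0=a(|x|)|u_0|^{p-2}u_0$ is again non-negative and radially increasing --- is established in \cite{C-M}; your outline of this step is the right idea but too quick as stated, since in the identity $r^{N-1}v_0'(r)=\int_0^r s^{N-1}\big(a(s)u_0(s)^{p-1}-v_0(s)\big)\,ds$ the integrand involves the unknown $v_0$ and is not pointwise non-negative in any obvious way; the honest argument applies a maximum principle to $v_0'$ (or a comparison argument) using that $r\mapsto a(r)u_0(r)^{p-1}$ is increasing. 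So: keep your choice of $\Psi$, $\Phi$, and the monotone cone, but replace minimization over a small ball by the mountain pass argument on the full cone.
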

{\it Sketch of the proof.}
Let $V=L^p(\Omega)\cap H_r^1(\Omega),$ where $H^1_r$ is the set of radial functions in $H^1(\Omega).$ We apply Theorem \ref{main}, where 
\[\Psi(u)=\int_\Omega \frac{|\nabla u|^2+u^2}{2}\, dx, \qquad \Phi(u)=\frac{1}{p}\int_\Omega a(x) | u|^p \, dx,\]
and  
\[K= \big \{u\in V: u(r)\geq  0, u(r)\leq u(s), \forall r,s \in [0,1], r\leq s\big \}.\]
It can be easily deduced that  that $V \cap K$ is continuously embedded in $L^{\infty}(\Omega)$
from which one can apply Theorem \ref{MPT} to show that $I=\Psi-\Phi$ restricted to $K$ has a critical point $u_0\in K$ of mountain pass type (See \cite{C-M} for a detailed argument). 
It is also established in \cite{C-M} that there exists $v_0\in K $ satisfying  $-\Delta v_0 + v_0= a(|x|)|u_0|^{p-2}u_0.$  Thus,  by Theorem \ref{main}, $u_0$ is a non-negative and nontrivial solution of (\ref{eq}). It also follows from the maximum principle that $u_0$ is indeed positive.
\hfill$\square$\\

We remark that finding radially increasing solutions of  problems of type  (\ref{eq}) has been the subject of many studies in recent years  starting the works of \cite{B-N-W, G-N, S-T}. 
\section{Proof of the variatinal principle.}

In this section we shall prove Theorem \ref{main}. We first recall some important definitions and results from   convex analysis.

Let $V$ be a  real Banach  space and $V^*$ its topological dual  and let $\langle .,. \rangle $ be the pairing between $V$ and $V^*.$
Let $\Psi : V \rightarrow \mathbb{R}\cup \{\infty\}$ be a proper convex  function. The subdifferential $\partial \Psi $ of $\Psi$
is defined  to be the following set-valued operator: if $u \in Dom (\Psi)=\{v \in V; \, \Psi(v)< \infty\},$ set
\[\partial \Psi (u)=\big \{u^* \in V^*; \langle u^*, v-u \rangle + \Psi(u) \leq \Psi(v) \text{  for all  } v \in V\big \}\]
and if $u \not \in Dom (\Psi),$ set $\partial \Psi (u)=\varnothing.$ If $\Psi$ is G\^ateaux differentiable at $u,$ denote by $D \Psi(u)$ the derivative of $\Psi$ at $u.$ In this case  $\partial \Psi (u)=\{ D  \Psi(u)\}.$\\
 The  Fenchel  dual of an arbitrary function $\Psi$ is denoted by  $\Psi^*,$ that is function on $V^*$ and is defined by
\[\Psi^*(u^*)=\sup \{\langle u^*, u \rangle- \Psi (u); u \in V\}.\]
Clearly $\Psi^*: V^* \rightarrow \mathbb{R}\cup \{+\infty\}$  is convex and  weakly lower semi-continuous.  The following standard  result  is crucial in the subsequent analysis (see \cite{Ek-Te} for a proof).
\begin{proposition}\label{var-pro} Let $\Psi : V \rightarrow \mathbb{R}\cup \{+\infty\}$ be convex and lower-semi continuous. 
 then $\Psi^{**}=\Psi$ and the following holds:
\[
  \Psi (u) +\Psi^*(u^*) = \langle u, u^* \rangle \quad   \Longleftrightarrow \quad 
 u^* \in \partial \Psi (u)\quad  \Longleftrightarrow \quad 
  u \in \partial \Psi^* (u^*).\]
\end{proposition}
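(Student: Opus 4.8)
The plan is to separate the statement into its two components: the biconjugation identity $\Psi^{**}=\Psi$ (the Fenchel--Moreau theorem) and the chain of equivalences, the latter following almost formally once the identity is available. First I would dispose of the easy half of the identity. For every $u\in V$ and every $u^*\in V^*$, the definition of the conjugate gives $\Psi^*(u^*)\ge \langle u^*,u\rangle-\Psi(u)$, equivalently $\langle u^*,u\rangle-\Psi^*(u^*)\le \Psi(u)$; taking the supremum over $u^*$ yields $\Psi^{**}(u)\le\Psi(u)$ (this is the Fenchel--Young inequality, which I will reuse below). The reverse inequality $\Psi^{**}\ge\Psi$ is where convexity and lower semicontinuity enter, and I would argue by contradiction via Hahn--Banach separation in $V\times\R$. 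Supposing $\Psi^{**}(u_0)<\Psi(u_0)$ for some $u_0$, the epigraph $\mathrm{epi}(\Psi)=\{(u,t):\Psi(u)\le t\}$ is convex (by convexity of $\Psi$) and closed (by lower semicontinuity), while $(u_0,\Psi^{**}(u_0))$ lies outside it; I strictly separate them by a closed hyperplane, producing $(\ell,\beta)\in V^*\times\R$ and $\gamma\in\R$ with $\langle\ell,u\rangle+\beta t\le\gamma<\langle\ell,u_0\rangle+\beta\Psi^{**}(u_0)$ for all $(u,t)\in\mathrm{epi}(\Psi)$.

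The hard part will be the bookkeeping around the sign of $\beta$ and the possibility of a vertical separating hyperplane. Because $(u,t)\in\mathrm{epi}(\Psi)$ forces $(u,t')\in\mathrm{epi}(\Psi)$ for all $t'>t$, letting $t\to+\infty$ in the separation inequality forces $\beta\le 0$. If $\beta<0$, I normalize $\beta=-1$ and evaluate at $t=\Psi(u)$ to obtain $\langle\ell,u\rangle-\Psi(u)\le\gamma$ for all $u$, hence $\Psi^*(\ell)\le\gamma$; the strict inequality at $(u_0,\Psi^{**}(u_0))$ then gives $\langle\ell,u_0\rangle-\Psi^*(\ell)>\Psi^{**}(u_0)$, contradicting $\Psi^{**}(u_0)=\sup_{\ell'}\{\langle\ell',u_0\rangle-\Psi^*(\ell')\}$. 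The delicate case $\beta=0$ (a vertical hyperplane, which can occur when $u_0\notin\overline{\mathrm{Dom}(\Psi)}$) yields only $\langle\ell,u\rangle\le\gamma<\langle\ell,u_0\rangle$ on $\mathrm{Dom}(\Psi)$; here properness of $\Psi$ supplies some $\ell_0\in V^*$ with $\Psi^*(\ell_0)<\infty$, and the estimate $\Psi^*(\ell_0+\lambda\ell)\le\Psi^*(\ell_0)+\lambda\gamma$ gives $\Psi^{**}(u_0)\ge\langle\ell_0,u_0\rangle-\Psi^*(\ell_0)+\lambda(\langle\ell,u_0\rangle-\gamma)\to+\infty$ as $\lambda\to+\infty$, again a contradiction. (Properness also rules out $\Psi^{**}(u_0)=-\infty$, since $\Psi^*(\ell_0)<\infty$ forces $\Psi^{**}\ge\langle\ell_0,\cdot\rangle-\Psi^*(\ell_0)>-\infty$, so the finite case treated above is the only one that arises.)

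With $\Psi^{**}=\Psi$ in hand, the equivalences are immediate and require no further topology. By definition $u^*\in\partial\Psi(u)$ means $\langle u^*,v-u\rangle+\Psi(u)\le\Psi(v)$ for all $v$, i.e. $\langle u^*,v\rangle-\Psi(v)\le\langle u^*,u\rangle-\Psi(u)$ for all $v$; taking the supremum over $v$ on the left shows this is equivalent to $\Psi^*(u^*)\le\langle u,u^*\rangle-\Psi(u)$, which together with the always-valid Fenchel--Young inequality $\Psi^*(u^*)\ge\langle u,u^*\rangle-\Psi(u)$ is precisely the equality $\Psi(u)+\Psi^*(u^*)=\langle u,u^*\rangle$. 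This gives the first equivalence. For the second, I would apply the identical computation to the convex lower semicontinuous function $\Psi^*$ on $V^*$, whose conjugate is $\Psi^{**}=\Psi$: thus $u\in\partial\Psi^*(u^*)$ is equivalent to $\Psi^*(u^*)+\Psi(u)=\langle u,u^*\rangle$, the same equality. Hence all three conditions coincide, completing the proof.
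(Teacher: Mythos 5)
Your proposal is correct, but there is nothing in the paper to compare it against: the paper does not prove Proposition \ref{var-pro} at all, stating it as standard and citing Ekeland--Temam \cite{Ek-Te}. Your argument is essentially the classical proof found in that reference: Fenchel--Young gives $\Psi^{**}\le\Psi$ for free, Hahn--Banach separation of $(u_0,\Psi^{**}(u_0))$ from the closed convex epigraph gives the reverse inequality (with the correct case split on the sign of $\beta$, including the vertical-hyperplane case $\beta=0$), and the subdifferential equivalences then reduce to the algebraic observation that $u^*\in\partial\Psi(u)$ is exactly the Young equality, applied once to $\Psi$ and once to $\Psi^*$ with $\Psi^{**}=\Psi$. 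One point you gloss over: your claim that ``properness of $\Psi$ supplies some $\ell_0$ with $\Psi^*(\ell_0)<\infty$'' is not a consequence of properness alone; it requires convexity and lower semicontinuity as well, and is itself proved by the same separation argument --- separate $(u_1,t_1)$ with $u_1\in\mathrm{Dom}(\Psi)$ and $t_1<\Psi(u_1)$ from the epigraph, where the choice $u_1\in\mathrm{Dom}(\Psi)$ forces $\beta<0$ and hence yields a continuous affine minorant of $\Psi$. This is a citation-level omission rather than a genuine gap, since you use $\ell_0$ only in the degenerate case and the fact is standard. Also note that you quietly use properness (as does the surrounding section of the paper, which defines $\partial\Psi$ only for proper $\Psi$), whereas the proposition as literally stated omits it; your parenthetical handling of $\Psi^{**}(u_0)=-\infty$ correctly identifies where that hypothesis is needed. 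Finally, your reduction of the second equivalence to ``the identical computation for $\Psi^*$'' is legitimate even in a non-reflexive $V$, because the statement only asks whether a given $u\in V\subset V^{**}$ lies in $\partial\Psi^*(u^*)$, and the computation is purely algebraic at that point.
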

\textbf{Proof of Theorem \ref{main}.} Since $u_0$ is a critical point of $I(u)=\Psi_K(u)-\Phi(u),$
it follows from Definition \ref{p3} that 
\begin{equation}\label{ineq0}
\Psi_K(v)-\Psi_K(u_0)\geq \langle D \Phi(u_0),v-u_0\rangle,\quad \forall v\in V.
\end{equation}
It follows from $(i)$ and $(ii)$ in the theorem that $u_0, v_0 \in K$ and $D \Psi(v_0)=D\Phi(u_0)$. Thus, it follows from  inequality  (\ref{ineq0}) with  $v=v_0$ that 
\begin{equation}\label{ineq2}
\Psi(v_0)-\Psi(u_0)\geq \langle D \Psi(v_0),v_0-u_0\rangle.
\end{equation}
 Since $\Psi$ is G\^ateaux differentiable at $v_0\in K,$ it follows that $\partial \Psi(v_0)=\{D \Psi(v_0)\}$  which together with 
 the convexity of $\Psi$ one obtains  that
\begin{equation}\label{ineq1}
\Psi(u_0)-\Psi(v_0)\geq \langle D \Psi(v_0),u_0-v_0\rangle.
\end{equation}
It follows from (\ref{ineq2}) and (\ref{ineq1}) that
\begin{equation}\label{ineq3}
\Psi(v_0)-\Psi(u_0)=\langle D \Psi(v_0),v_0-u_0\rangle.
\end{equation}
We now claim that  $D \Psi(v_0)=D \Psi(u_0)$ from which the desired result follows,
\[D \Psi(u_0)=D \Psi(v_0)=D\Phi(u_0).\]
\,\,\,\,{\it Proof of the claim:} 
Let $w^*=D \Psi(v_0).$ Since $\Psi$ is convex and lower semi continuous it follows from Proposition \ref{var-pro} that 
 \begin{equation}\label{eq2}\Psi(v_0)+ \Psi^*(w^*)=\langle w^*, v_0\rangle.
 \end{equation}
It now follows from (\ref{ineq3}) and (\ref{eq2}) that 
 \begin{eqnarray*}
 \langle w^*, u_0\rangle-\Psi(u_0)=\langle w^*, v_0\rangle-\Psi(v_0)=\Psi^*(w^*),
 \end{eqnarray*}
 from which one obtains
 \[\Psi(u_0)+ \Psi^*(w^*)=\langle w^*, u_0\rangle.\]
 This indeed implies that $w^* \in \partial \Psi(u_0)$ by virtue of Proposition \ref{var-pro}. Since $\Psi$ is  G\^ateaux differentiable at $u_0$ we have that $\partial \Psi(u_0)=\{D \Psi(u_0)\}$. Therefore, 
 \[D \Psi(u_0)=w^*=D \Psi(v_0),\]
 as claimed.
\hfill $\square$

\section{Appendix} \label{convex}

We shall now recall some notations and results for the minimax principles of  lower semi-continuous functions used throughout the paper.  
\begin{definition}
Let $V$ be a real Banach space,  $\Phi\in C^1(V,\mathbb{R})$ and $\Psi: V\rightarrow (-\infty, +\infty]$ be proper (i.e. $Dom(\Psi)\neq \emptyset$), convex and lower semi-continuous.
A point $u\in  V$ is said to be a critical point of \begin{equation} \label{form}I:=  \Psi-\Phi \end{equation} if $u\in Dom(\Psi)$  and if it satisfies
the inequality
\begin{equation}
 <D \Phi(u), u-v>+ \Psi(v)- \Psi(u) \geq 0, \qquad \forall v\in V.
\end{equation}
\end{definition}

\begin{definition}
We say that $I$ satisfies the Palais-Smale compactness  condition (PS)   if
every sequence $\{u_n\}$ such that $I(u_n)\rightarrow c\in \mathbb{R},$ and 
\[  <D \Phi(u_n), u_n-v>+ \Psi(v)- \Psi(u_n) \geq -\epsilon_n\|v- u_n\|, \qquad \forall v\in V,
\]
where $\epsilon_n \rightarrow 0$, then $\{u_n\}$ possesses a convergent subsequence.
\end{definition}

The following is  proved in \cite{Su}.
\begin{theorem}\label {MPT}
(Mountain Pass Theorem).  Suppose that
$I : V \rightarrow (-\infty, +\infty ]$ is of the form (\ref{form}) and satisfies the Palais-Smale   condition and  the Mountain Pass Geometry (MPG):
\begin{enumerate}
\item $I(0)= 0$.
and  there exists $e\in V$ such that $I(e)\leq 0$.
\item there exists some $\rho$ such that $0<\rho<\|e\|$ and for every $u\in V$ with $\|u\|= \rho$ one has $I(u)>0$.
\end{enumerate}
Then $I$ has a critical value $c\geq \rho$ which is  characterized by
$$c= \inf_{g\in \Gamma}  \sup_{t\in [0,1]} I[g(t)],$$
where   $\Gamma= \{g\in C([0,1],V): g(0)=0, g(1)= e\}.$
\end{theorem}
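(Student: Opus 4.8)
The plan is to prove this by the classical minimax-plus-deformation scheme, adapted to the nonsmooth functional $I=\Psi-\Phi$, and to argue by contradiction. First I would record that the candidate value $c=\inf_{g\in\Gamma}\sup_{t\in[0,1]}I(g(t))$ is strictly positive: since $g(0)=0$ has norm $0<\rho$ while $g(1)=e$ has norm $\|e\|>\rho$, continuity of $g$ forces some $t_0$ with $\|g(t_0)\|=\rho$, so $\sup_t I(g(t))\geq \inf_{\|u\|=\rho}I(u)=:\alpha>0$; taking the infimum over $\Gamma$ gives $c\geq\alpha>0$. The substance of the theorem is then to show that this level $c$ is attained as a critical value.

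The engine is a deformation lemma: if $c$ is \emph{not} a critical value and (PS) holds, then for small $\varepsilon>0$ there is a continuous map $\eta:[0,1]\times V\to V$ with $\eta(0,\cdot)=\mathrm{id}$, with $\eta(t,u)=u$ whenever $I(u)\notin[c-2\varepsilon,c+2\varepsilon]$, and with $\eta\big(1,\{I\leq c+\varepsilon\}\big)\subset\{I\leq c-\varepsilon\}$. Granting such an $\eta$, I would finish at once: choose $g\in\Gamma$ with $\sup_t I(g(t))<c+\varepsilon$, and observe that $I(0)=0$ and $I(e)\leq0$ are both $<c-\varepsilon$, so the endpoints are left fixed and $\eta(1,g(\cdot))\in\Gamma$; but then $\sup_t I(\eta(1,g(t)))\leq c-\varepsilon$, contradicting the definition of $c$.

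The heart of the argument — and the main obstacle — is constructing the deformation when $\Psi$ is only convex and lower semicontinuous, so that no gradient flow of $I$ is available. Here I would exploit convexity through the proximal (resolvent) map. For each $u$ let $v(u)$ be the unique minimizer of the strictly convex, coercive functional $J_u(v)=\Psi(v)-\langle D\Phi(u),v\rangle+\tfrac12\|v-u\|^2$, and set $d(u)=v(u)-u$; equivalently $v(u)=(\mathrm{Id}+\partial\Psi)^{-1}(u+D\Phi(u))$, a composition of the $1$-Lipschitz resolvent with the continuous map $u\mapsto u+D\Phi(u)$, so $u\mapsto v(u)$ is continuous. Comparing $J_u(v(u))\leq J_u(u)$ yields $\Psi(v(u))-\Psi(u)-\langle D\Phi(u),d(u)\rangle\leq-\tfrac12\|d(u)\|^2$, and convexity of $\Psi$ together with $\Phi\in C^1$ then gives, along the segment $u+sd(u)$, the quantitative descent $I(u+sd(u))\leq I(u)-\tfrac{s}{2}\|d(u)\|^2+o(s)$. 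Thus $\|d(u)\|$ measures failure of criticality, and $u$ is a critical point precisely when $d(u)=0$.

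Finally I would feed in (PS) to close the loop. If $c$ were not critical, I claim $\inf\{\|d(u)\|:I(u)\in[c-\varepsilon,c+\varepsilon]\}\geq\delta>0$ for small $\varepsilon$; otherwise a sequence $u_n$ with $I(u_n)\to c$ and $\|d(u_n)\|\to0$ would satisfy the approximate variational inequality defining a Palais--Smale sequence, hence subconverge to a critical point at level $c$, a contradiction. With this uniform lower bound I would use a partition of unity on the band $\{c-2\varepsilon\leq I\leq c+2\varepsilon\}$ to patch the directions $d(u)$ into a locally Lipschitz vector field $W$ with $W=0$ off the band, integrate $\dot{\sigma}=W(\sigma)$ to obtain the flow $\eta$, and check from the descent estimate that flowing for a fixed finite time lowers $I$ by at least $2\varepsilon$ on $\{I\leq c+\varepsilon\}$, which is exactly the deformation property used above. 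The delicate points are the continuity and local Lipschitz regularity of the field (supplied by nonexpansiveness of the resolvent together with Proposition \ref{var-pro}) and the control of the $o(s)$ remainder in the descent estimate uniformly on the band.
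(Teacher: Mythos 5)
The paper does not prove Theorem \ref{MPT} at all: it is quoted verbatim from Szulkin \cite{Su}, so the relevant comparison is with Szulkin's proof. Your minimax skeleton (positivity of $c$, deformation lemma, contradiction with the definition of $c$) is the standard one and matches \cite{Su}, but the engine you propose for the deformation fails, and at the exact point where the nonsmoothness matters. Your descent estimate $I(u+s\,d(u))\leq I(u)-\tfrac{s}{2}\|d(u)\|^2+o(s)$ is obtained from convexity of $\Psi$ \emph{along the straight segment} joining $u$ to $v(u)$. Once you patch the directions into a field $W$ and integrate $\dot\sigma=W(\sigma)$, the integral curves are no longer such segments, and there is no chain rule --- nor even upper semicontinuity --- for the lower semicontinuous convex part $\Psi$ along a curved path: $t\mapsto\Psi(\sigma(t))$ can jump upward, indeed to $+\infty$ the instant the flow leaves $Dom(\Psi)$ (think of $\Psi=\Psi_K$, the indicator-augmented functional used throughout this very paper). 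So the claim that ``flowing for a fixed finite time lowers $I$ by $2\varepsilon$'' cannot be verified from your estimate; this is precisely why Szulkin's deformation is \emph{not} an ODE flow but a straight-line homotopy $\eta(t,u)=(1-t)u+t\sum_i\lambda_i(u)w_i$, with fixed targets $w_i$ on each patch of a partition of unity, so that convexity gives $\Psi(\eta(t,u))\leq(1-t)\Psi(u)+t\sum_i\lambda_i(u)\Psi(w_i)$ and the decrease follows directly.

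There are further gaps even granting a deformation. First, the proximal machinery is Hilbert-space machinery, while the theorem is stated on an arbitrary real Banach space $V$: existence and uniqueness of the minimizer of $J_u$ require reflexivity and strict convexity of the norm, and $1$-Lipschitzness of $(\mathrm{Id}+\partial\Psi)^{-1}$ is false in general Banach spaces, so your criticality measure $d(u)$ is not even well defined in the stated generality. Second, your (PS) linkage is not justified: optimality of $v(u_n)$ yields the variational inequality \emph{centered at} $v(u_n)$, and transferring it to $u_n$ costs the term $\Psi(u_n)-\Psi(v(u_n))$, which need not tend to $0$ when $\|d(u_n)\|\to 0$ (subdifferentials of $\Psi$ can be unbounded); using $v(u_n)$ itself as the Palais--Smale sequence instead requires $I(v(u_n))\to c$ and $D\Phi(v(u_n))-D\Phi(u_n)\to 0$, neither of which follows from $\Phi\in C^1$ alone. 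Third, a smaller point: your claim $\alpha:=\inf_{\|u\|=\rho}I(u)>0$ does not follow from the stated hypothesis, which is only pointwise positivity on a non-compact sphere for a merely lower semicontinuous $I$; Szulkin assumes the uniform bound $I\geq\alpha>0$ on the sphere as part of the mountain pass geometry, and that strengthened hypothesis is evidently what the paper's (typo-ridden) conclusion ``$c\geq\rho$'' intends.
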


\end{document}